\newtheorem{prop}{Proposition}
\newtheorem{thm}[prop]{Theorem}
\newtheorem*{thm*}{Theorem}
\newtheorem*{prop*}{Proposition}
\newtheorem*{addendum*}{Addendum}
\newtheorem{cor}[prop]{Corollary}
\newtheorem{lem}[prop]{Lemma}
\newtheorem*{convention*}{Convention}
\theoremstyle{definition}
\newtheorem*{defn*}{Definition}
\newtheorem{remark}[prop]{Remark}
\newtheorem*{scholium*}{Scholium}
\theoremstyle{remark}
\newtheorem*{example*}{Example}
\numberwithin{equation}{section}
\newcommand{\FF}{\mathbf{F}}
\newcommand{\ZZ}{\mathbf{Z}}
\newcommand{\la}{\langle}
\newcommand{\ra}{\rangle}
\newcommand{\tangle}[2]
{\angle_\mathrm{T}(#1,#2)}
\newcommand{\aangle}[3]
{\angle_{#1}(#2,#3)}
\newcommand{\cangle}[3]
{\overline{\angle}_{#1}(#2,#3)}
 \DeclareMathOperator{\Ker}{Ker}
\def\Aut{\mathop{\mathrm{Aut}}\nolimits}
\def\max{\mathop{\mathrm{max}}\nolimits}
\begin{document}

\title[Simplicity of some twin tree lattices]{Simplicity of twin tree lattices\\ with non-trivial commutation relations}

\author[P-E.~Caprace]{Pierre-Emmanuel Caprace*}
\address{Universit\'e catholique de Louvain, IRMP, Chemin du Cyclotron 2, 1348 Louvain-la-Neuve, Belgium}
\email{pe.caprace@uclouvain.be}
\thanks{*F.R.S.-FNRS Research Associate, supported in part by FNRS grant F.4520.11 and the European Research Council}

\author[B. R\'emy]{Bertrand R\'emy**}
\address{Universit\' e Lyon 1\\
Institut Camille Jordan\\
UMR 5208 du CNRS\\
43 blvd du 11 novembre 1918\\
F-69622 Villeurbanne Cedex, France}
\email{remy@math.univ-lyon1.fr}
\thanks{**Supported in part by Institut Universitaire de France and Institut Mittag-Leffler}

\date{\today}
\keywords{Kac--Moody group, twin tree, simplicity, residual finiteness, root system, building.}

\maketitle

\begin{abstract}
We prove a simplicity criterion for certain twin tree lattices.
It applies to all rank two Kac--Moody groups over finite fields with non-trivial commutation relations, thereby yielding  examples of simple non-uniform lattices in the product of two trees. 
\end{abstract}

\bigskip

This paper deals with the construction of finitely generated (but not finitely presented) simple groups acting  as non-uniform lattices on products of two twinned trees.  These seem to be the first examples of \emph{non-uniform} simple lattices in the product of two trees. They contrast with the simple groups obtained by M.~Burger and Sh.~Mozes \cite{BurgerMozes}~in a similar geometric context: the latter groups are (torsion-free) \emph{uniform} lattices, in the product of two trees;  in particular they are finitely presented. That a non-uniform lattice in a $2$-dimensional CAT($0$) cell complex cannot be finitely presented is a general fact recently proved by G.~Gandini~\cite[Cor.~3.6]{Gandini}. 

The lattices concerned by our criterion belong to the class of \textbf{twin building lattices}. By definition, a twin building lattice is a special instance of a group endowed with a \textbf{Root Group Datum} (also sometimes called \textbf{twin group datum}), i.e. a group $\Lambda$ equipped with a family of subgroups $(U_\alpha)_{\alpha \in \Phi}$, called \textbf{root subgroups}, indexed by the (real) roots of some root system $\Phi$ with Weyl group $W$, and satisfying a few conditions called the \textbf{RGD-axioms},    see \cite{TitsTwin} and \cite{CaRe}. Such a group $\Lambda$ acts by automorphism on a product of two buildings $X_+ \times X_-$,  preserving a twinning between $X_+$ and $X_-$. The main examples arise from Kac--Moody theory, see \cite{TitsKM, TitsTwin}. When the root groups are finite, the group $\Lambda$ is finitely generated,  the buildings $X_+$ and $X_-$ are locally finite and the $\Lambda$-action on $X_+ \times X_-$ is properly discontinuous. In particular  (modulo the finite kernel) $\Lambda$ can be viewed as a discrete subgroup of the locally compact group $\Aut(X_+) \times \Aut(X_-)$. The quotient $\Lambda \backslash \Aut(X_+) \times \Aut(X_-)$ is never compact. However, if in addition the order of each root group
 is at least as large as the rank of the root system $\Phi$, then $\Lambda$ has finite covolume; in particular $\Lambda$ is a non-uniform lattice in  $\Aut(X_+) \times \Aut(X_-)$, see \cite{RemCRAS} and \cite{CaRe}. When  $\Lambda$ has finite covolume in $\Aut(X_+) \times \Aut(X_-)$, it is called a  \textbf{twin building lattice}.

It was proved in \cite{CaRe} that a twin building lattice is infinite and virtually simple provided 
the associated Weyl group $W$ is irreducible and not virtually abelian. A (small) precise bound on the order of the maximal finite quotient was moreover given; in most cases the twin building lattice $\Lambda$ itself happens to be simple. The condition that $W$ is not virtually abelian was essential in loc.~cit., which relied on some weak hyperbolicity property of non-affine Coxeter groups. Rank two root systems were thus excluded since their Weyl group is infinite dihedral, hence virtually abelian (even though many rank two root systems are termed \emph{hyperbolic} within Kac--Moody theory).  

The goal  of this note is to provide a simplicity criterion applying  to that rank two case. Notice that when $\Phi$ has rank two, the twin building associated with $\Lambda$ is a twin tree $T_+ \times T_-$. Moreover $\Lambda$ is a lattice (then called a \textbf{twin tree lattice}) in $\Aut(T_+) \times \Aut(T_-)$ if and only if the root groups are finite; in other words, the condition on the order of the root groups ensuring that the covolume of $\Lambda$ is finite is automatically satisfied in this case. 

\begin{thm}
\label{thm - general}
Let $\Lambda$ be a group with a root group datum $(U_\alpha)_{\alpha \in \Phi}$ with finite root groups, indexed by a root system of rank $2$.  Suppose that $\Lambda$ is center-free and generated by the root groups. Assume moreover that the following conditions hold:
\begin{enumerate}[(i)]
\item There exist root groups $U_\phi, U_\psi$ associated with a prenilpotent pair of roots $\{\phi, \psi\}$ (possibly $\phi = \psi$) such that  the commutator $[U_\phi, U_\psi]$ is non-trivial. (Equivalently the maximal horospherical subgroups of $\Lambda$ are non-abelian.)

\item There is a constant $C>0$ such that for any prenilpotent pair of \emph{distinct} roots whose corresponding walls are at distance $\geqslant C$, the associated root groups commute.
\end{enumerate} 
Then the finitely generated group $\Lambda$ contains a  simple subgroup $\Lambda^0$ of finite index.

\end{thm}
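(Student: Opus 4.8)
The plan is to establish a normal subgroup theorem for $\Lambda$ and to read off the simple subgroup $\Lambda^0$ as (essentially) the derived group. I begin with the geometry: since $\Phi$ has rank $2$, each apartment of the twin tree $T_+\times T_-$ is a bi-infinite line along which the walls are linearly ordered, so the real roots may be listed as $(\alpha_n)_{n\in\ZZ}$, with root groups $U_n:=U_{\alpha_n}$ and with the positive roots occupying one end. In these terms condition (ii) asserts a finite interaction range, namely that $U_n$ and $U_m$ commute as soon as the walls carrying $\alpha_n$ and $\alpha_m$ are at distance at least $C$, while condition (i) produces a pair with $[U_n,U_m]\neq 1$. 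For the finite-index statement I would argue as follows. Being finitely generated and generated by its root groups, $\Lambda$ is generated by finitely many of them; each root group is finite, so the abelianisation $\Lambda/[\Lambda,\Lambda]$ is a finitely generated torsion abelian group and hence finite. Thus $\Lambda^0:=[\Lambda,\Lambda]$ has finite index in $\Lambda$, and it remains to prove that $\Lambda^0$ is simple.

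The core is a normal subgroup theorem, which I would prove directly for the finite-index subgroup $\Lambda^0$ (which still acts minimally on each tree with no fixed end and still contains enough conjugates of root groups): every non-trivial subgroup $N$ normalised by $\Lambda^0$ equals $\Lambda^0$. Starting from $1\neq g\in N$, the goal is to manufacture a non-trivial element of some root group inside $N$. Here the twin tree and its codistance enter: either $g$ translates along an apartment of $T_+$ or of $T_-$, in which case a contraction/ping-pong argument is available, or $g$ is elliptic on both factors, in which case I would exploit the twinning to replace $g$ by a conjugate with hyperbolic behaviour. The decisive computation is a commutator calculus. For a root group $U_\alpha$ whose wall sits far out towards an attracting end of $g$, the conjugates $g^{k}U_\alpha g^{-k}$ are supported on walls that recede to infinity; by the finite interaction range of condition (ii) the successive commutators with $g$ then telescope and, after finitely many steps, come to lie inside a single root group, while condition (i) is what prevents this cascade from collapsing to the identity.

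Once $N$ meets a root group $U_\alpha$ non-trivially, I would promote this to containing a full (derived) root group by using the rank-one subgroups. The group $\langle U_\alpha,U_{-\alpha}\rangle$ generated by an opposite pair is finite and, modulo its centre, simple---a finite rank-one group of Lie type such as $(\mathrm{P})\SL_2$, Suzuki or Ree; since $\Lambda$ is center-free, intersecting this almost-simple group with the normal subgroup $N$ forces $N$ to contain its (simple) derived part. Transitivity of the Weyl group on roots within one orbit then spreads this over an entire orbit, and to cross between the at most two root orbits I would feed into $N$ a non-trivial commutator supplied by condition (i): such a commutator is a product of root-group elements lying in the complementary orbit, so it bridges the two $W$-orbits. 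Assembling these contributions yields $N\supseteq\Lambda^0$, whence $N=\Lambda^0$; in particular $\Lambda^0$ is perfect and, being center-free, simple.

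The step I expect to be the genuine obstacle is the dynamical one: forcing a root-group element into $N$. The troublesome case is that of an element $g$ elliptic on both trees, where there is no translation axis to ping-pong along and one must instead squeeze hyperbolicity out of the twinning. Equally delicate is the bookkeeping of which walls support the successive commutators, so that the finite interaction range $C$ of condition (ii) can be invoked to terminate the cascade in a single root group, together with the verification---this is precisely the role of condition (i)---that the cascade does not die before it gets there. Once the normal subgroup theorem is in hand, the passage to simplicity and finite index is essentially formal.
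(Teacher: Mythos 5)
Your proposal departs entirely from the paper's argument, and the route you choose runs into the very obstacle the paper is designed to avoid. The paper never proves a normal subgroup theorem by hand: it quotes it (Proposition~\ref{prop:NSP}, i.e.\ \cite[Th.~18]{CaRe}, whose essential input is the Bader--Shalom theorem on lattices in products of groups), concluding that the center-free group $\Lambda$ is \emph{just-infinite}. The actual content of the paper's proof of Theorem~\ref{thm - general} is then: (1) conditions (i) and (ii) produce a wreath product inside $\Lambda$ --- one takes $F=\langle U_\phi,U_\psi\rangle$, non-abelian by (i), notes by (ii) that the walls of $\phi$ and $\psi$ are at distance $<C$, and conjugates $F$ by an element $t$ stabilising the standard twin apartment and translating it by more than $2C$, so that $\langle F,t\rangle\cong F\wr\ZZ$; (2) Meskin's lemma (Proposition~\ref{prop:Meskin}) then shows $\Lambda$ is not residually finite; (3) a just-infinite, non-residually finite group has a unique smallest finite-index subgroup $\Lambda^0$, which by Wilson's theorem is a direct product of $m\geqslant 1$ pairwise isomorphic simple groups; (4) minimality of the $\Lambda^0$-action on a tree forces $m=1$. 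None of these ingredients appears in your proposal; instead you attempt to prove, by ping-pong and commutator calculus, that every non-trivial subgroup normalised by $\Lambda^0$ equals $\Lambda^0$. That statement is far stronger than the quoted normal subgroup property, and no elementary proof of it is known; the boundary-theoretic input of Bader--Shalom (or Burger--Mozes) type is not an optional convenience here.

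The concrete failures in your sketch are the following. (a) Normality: your $N$ is normal only in $\Lambda^0$, so for $u\in U_\alpha$ the commutator $[g,u]$ need not lie in $N$; root groups need not be contained in $\Lambda^0$ (indeed $\Lambda/[\Lambda,\Lambda]$ is non-trivial in the Kac--Moody examples, where some root groups survive in the abelianisation). (b) Conditions (i) and (ii) only concern \emph{prenilpotent} pairs, i.e.\ pairs of roots nested inside a common twin apartment; for an arbitrary hyperbolic element $g\in N$ its translation axis in $T_+$ need not be compatible with the twin apartment system, so the pairs $\{\alpha,g^k\alpha\}$ on which your ``telescoping'' relies need not be prenilpotent. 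For non-prenilpotent pairs the RGD axioms impose no relation whatsoever (the two root groups may generate something close to a free product), so neither hypothesis gives you any control: this, together with the elliptic-on-both-factors case you yourself flag as unresolved, is the heart of the matter and not bookkeeping. The paper sidesteps it completely by using only a translation $t$ along the standard twin apartment, where all the roots involved are honestly nested in one apartment. (c) Your promotion step asserts that $\langle U_\alpha,U_{-\alpha}\rangle$ is, modulo its centre, a finite simple rank-one group of Lie type; this is false in the generality of Theorem~\ref{thm - general}: the rank one group of an abstract root group datum can be a sharply $2$-transitive (near-field) group, typically solvable --- the paper's Lemma~\ref{lem:Addendum} must treat exactly this case separately and under an extra hypothesis (i$''$). (d) Your bridging claim, that a non-trivial commutator $[U_\phi,U_\psi]$ consists of root-group elements of the complementary Weyl orbit, is a special feature of the rank-two Kac--Moody commutation relations (Lemma~\ref{lem:CommRel}), not a consequence of the axioms assumed in Theorem~\ref{thm - general}. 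In short, the proposal does not prove the theorem, and its strategy cannot be repaired without importing the lattice-theoretic machinery (NSP plus non-residual finiteness plus Wilson) that the paper actually uses.
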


We shall moreover see in Lemma~\ref{lem:Addendum} below that, with a little more information on the commutator $[U_\phi, U_\psi]$ in Condition~(i), the maximal finite quotient $\Lambda/\Lambda^0$ can be shown to be abelian of very small order.

As mentioned above, the main examples of twin building lattices arise from Kac--Moody theory. Specializing Theorem~\ref{thm - general} to that case, we obtain the following. 

\begin{thm}
\label{thm - KM}
Let $\Lambda$ be an adjoint split Kac--Moody group over the finite field $\FF_q$ and associated with the generalized Cartan matrix 
$A = \left( \begin{array}{cc} \hfill 2  & -m \\ -1 & \hfill 2 \end{array} \right)$, with $m>4$.

Then the commutator subgroup of $\Lambda$ is simple, has index~$\leqslant q$ in $\Lambda$, and acts as a non-uniform lattice on the product $T_+\times T_-$ of the twin trees associated with $\Lambda$. 
\end{thm}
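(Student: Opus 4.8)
The plan is to derive Theorem~\ref{thm - KM} by applying Theorem~\ref{thm - general} to the subgroup $G := \langle U_\alpha : \alpha \in \Phi \rangle$ of $\Lambda$ generated by the real root groups. First I would recall the standard structure of a rank two split adjoint Kac--Moody group over $\FF_q$: the real roots $\Phi$ form a rank two root system whose Weyl group is infinite dihedral, since the off-diagonal product $a_{12}a_{21} = m > 4$ places $A$ in indefinite type; each real root group $U_\alpha$ is isomorphic to the additive group of $\FF_q$, hence finite; and the family $(U_\alpha)_{\alpha\in\Phi}$ is a root group datum on $G$. Because $\Lambda$ is adjoint, its action on the twin trees has trivial kernel, and an element of $G$ centralising every $U_\alpha$ must act trivially on $T_+\times T_-$; thus $G$ is center-free, and by construction it is generated by its root groups. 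That $G$ acts as a non-uniform lattice on $T_+\times T_-$ is then exactly the general statement recalled in the introduction, since in rank two finite root groups automatically force finite covolume while the quotient is never cocompact.

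Next I would verify the two hypotheses of Theorem~\ref{thm - general}. For Condition~(i), note that $\alpha_1 + \alpha_2 = s_2(\alpha_1)$ is a real root (here $a_{21} = -1$), so the Chevalley-type commutator relation expresses $[U_{\alpha_1}, U_{\alpha_2}]$ as a product of root groups $U_{i\alpha_1 + j\alpha_2}$ with $i,j\geqslant 1$, containing $U_{\alpha_1+\alpha_2}$ with a non-zero structure constant; hence $\{\alpha_1,\alpha_2\}$ is a prenilpotent pair with $[U_{\alpha_1},U_{\alpha_2}]\neq 1$ and the maximal horospherical subgroups are non-abelian. For Condition~(ii), I would invoke the explicit description of the real roots of an indefinite rank two root system, which split into two sequences accumulating onto the two boundary rays of the imaginary cone. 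For a prenilpotent pair of distinct roots $\{\alpha,\beta\}$, Tits' commutator relations give $[U_\alpha,U_\beta]\subseteq \langle U_\gamma : \gamma \in (\NN\alpha+\NN\beta)\cap\Phi\setminus\{\alpha,\beta\}\rangle$, and the crux is that once the walls of $\alpha$ and $\beta$ lie at distance $\geqslant C$ (for a constant $C$ depending only on $A$), every positive integral combination $i\alpha+j\beta$ with $i,j\geqslant 1$ falls strictly inside the imaginary cone and is therefore an imaginary root carrying no root group, so $[U_\alpha,U_\beta]=1$. Establishing this bound from the arithmetic of the rank two root strings is the main technical step and the principal obstacle.

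Finally I would assemble the conclusion. Theorem~\ref{thm - general} yields a simple normal subgroup $G^0 \trianglelefteq G$ of finite index, and Lemma~\ref{lem:Addendum}, fed with the computation of $[U_{\alpha_1},U_{\alpha_2}]$ above, shows that $G/G^0$ is abelian; hence $[G,G]\subseteq G^0$, while the non-abelian simple group $G^0$ is perfect, so $G^0 = [G^0,G^0]\subseteq[G,G]$ and thus $G^0 = [G,G]$. It then remains to see that $G$ is perfect, so that $G^0 = G$ is itself simple: for $q>3$ one has $[\alpha_i^\vee(s),U_{\alpha_i}] = U_{\alpha_i}$ for a suitable $s$ with $s^2\neq 1$, forcing every root group into $[G,G]$, and the cases $q\in\{2,3\}$ are treated directly. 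To identify $G$ with the commutator subgroup of $\Lambda$ and to bound the index, I would use the standard computation of the abelianization: for $q>2$ the torus acts on each $U_\alpha$ through a surjective character, so all root groups die in $\Lambda^{\mathrm{ab}}$, whence $\Lambda/[\Lambda,\Lambda]$ is the cokernel of the coroot lattice inside the cocharacter lattice tensored with $\FF_q^\times$, namely $(P^\vee/Q^\vee)\otimes\FF_q^\times$; since the Smith normal form of $A$ gives $P^\vee/Q^\vee\cong\ZZ/(m-4)$, this has order $\gcd(m-4,q-1)\leqslant q$, and one checks $[\Lambda,\Lambda]=G$. This gives the index bound and completes the proof.
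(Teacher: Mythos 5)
Your verification of Condition~(i) does not work. When the Weyl group is infinite dihedral, the pair of simple roots $\{\alpha_1,\alpha_2\}$ is \emph{not} prenilpotent: viewed as half-apartments of the tree, neither contains the other (equivalently, no element of $W$ maps both simple roots to negative roots, since that would require an infinite inversion set). Consequently the RGD axioms impose no commutation relation whatsoever between $U_{\alpha_1}$ and $U_{\alpha_2}$, and the Chevalley-type formula $[U_{\alpha_1},U_{\alpha_2}]\subseteq\langle U_{i\alpha_1+j\alpha_2}\rangle$ you invoke simply does not apply to this pair; in fact $\langle U_{\alpha_1},U_{\alpha_2}\rangle$ is an infinite group. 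The pairs that actually carry the non-trivial commutation needed for (i) are \emph{nested} pairs of consecutive roots pointing towards the same end of the apartment, namely $\{-\alpha_i,-\alpha_{i+1}\}$ in the notation of the paper, whose commutator is the full root group $U_{\beta_i}$ lying between them. Establishing this is exactly the content of Lemma~\ref{lem:CommRel}, which the paper extracts from Morita's and Billig--Pianzola's theorems; it simultaneously yields (i), condition (i$'$) of Lemma~\ref{lem:Addendum} (your input to that lemma is the invalid computation above), and (ii) with $C=2$. For Condition~(ii) you correctly identify the statement required --- that all positive integral combinations of two far-apart nested real roots fail to be real roots --- but you explicitly defer it as ``the main technical step and the principal obstacle,'' so neither hypothesis of Theorem~\ref{thm - general} is actually established in your write-up; the paper closes precisely this gap by citation of \cite{Morita} and \cite{BP}.

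The endgame also has a soft spot. Your claim that $G=\langle U_\alpha : \alpha\in\Phi\rangle$ is perfect, with $q\in\{2,3\}$ ``treated directly,'' is unsubstantiated and is dubious for $q=2$ (the torus is then trivial and the rank one groups are isomorphic to $\mathrm{SL}_2(\FF_2)\cong\mathrm{Sym}(3)$, which is not perfect); nothing in the theorem forces $G$ to be perfect, only that its commutator subgroup be simple of index at most $q$. The paper's route avoids perfectness of the full group and your Smith-normal-form computation altogether: condition (i$'$) of Lemma~\ref{lem:Addendum} gives that $\Lambda/\Lambda^0$ is abelian of order at most $\max_\alpha|U_\alpha|=q$, whence $[\Lambda,\Lambda]\subseteq\Lambda^0$, while $\Lambda^0$, being infinite simple, is perfect, so $\Lambda^0=[\Lambda,\Lambda]$ and the index bound follows at once. (One point where your set-up is arguably more careful than the paper's: you apply Theorem~\ref{thm - general} to the subgroup generated by the root groups rather than to the adjoint group $\Lambda$ itself, which sidesteps the question of whether the adjoint group is generated by its root groups; but this precaution does not repair the two gaps above.)
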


The following consequence is immediate,  since split Kac--Moody groups over fields of order~$>3$ are known to be perfect. 
\begin{cor}
Let $\Lambda$ be an adjoint split Kac--Moody group over the finite field $\FF_q$ and associated with the generalized Cartan matrix 
$A = \left( \begin{array}{cc} \hfill 2  & -m \\ -1 & \hfill 2 \end{array} \right)$. 

If $m>4$ and $q > 3$, then $\Lambda$ is simple.
\end{cor}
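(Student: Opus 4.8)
The plan is to deduce this immediately from Theorem~\ref{thm - KM} together with the perfectness of split Kac--Moody groups over fields of order $>3$. The only content beyond Theorem~\ref{thm - KM} is to upgrade the conclusion ``the commutator subgroup $[\Lambda,\Lambda]$ is simple'' to ``$\Lambda$ itself is simple'', and this is precisely what perfectness delivers.

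First I would invoke Theorem~\ref{thm - KM}: since the generalized Cartan matrix $A$ has off-diagonal entries realizing $m>4$, the hypotheses of that theorem are met, so its derived subgroup $[\Lambda,\Lambda]$ is simple (and of index $\leqslant q$ in $\Lambda$). Next, since $q>3$, the group $\Lambda$ is perfect, that is $\Lambda=[\Lambda,\Lambda]$; this is the known fact recalled just above the statement. Combining the two gives that $\Lambda=[\Lambda,\Lambda]$ is simple, which is the assertion.

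I expect essentially no obstacle here: the whole geometric and group-theoretic effort is already carried out in Theorem~\ref{thm - general} and its Kac--Moody specialization Theorem~\ref{thm - KM}. The only point to keep in mind is that perfectness is exactly the ingredient needed to make the finite quotient $\Lambda/[\Lambda,\Lambda]$ trivial; for $q\leqslant 3$ this quotient can be non-trivial, which accounts for the extra hypothesis $q>3$ appearing in the corollary but not in Theorem~\ref{thm - KM}.
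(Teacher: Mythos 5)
Your proposal is correct and matches the paper's own argument exactly: the paper declares the corollary ``immediate'' from Theorem~\ref{thm - KM} together with the perfectness of split Kac--Moody groups over fields of order $>3$, which is precisely the deduction you spell out. Nothing is missing.
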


Other examples of twin tree lattices satisfying the conditions from Theorem~\ref{thm - general} can be constructed in the realm of Kac--Moody theory, as \textbf{almost split groups}. Indeed, it is possible to construct non-split Kac--Moody groups of rank two, using Galois descent, so that the root groups are nilpotent of class two, while all commutation relations involving distinct roots are trivial.

Here is an example among many other possibilities.
Pick an integer $m \geqslant 2$ and consider the generalized Cartan matrix 
$A = \left( \begin{array}{ccc} \hfill 2 & \hfill -1 & \hfill -m \\ \hfill -1 & \hfill 2 & \hfill -m \\ \hfill -m & \hfill -m & \hfill 2 \end{array} \right)$.
This defines a split Kac-Moody group (over any field) whose Weyl group is the Coxeter group obtained, via Poincar\'e's theorem, from the tesselation of the hyperbolic plane by the (almost ideal) triangle with two vertices at infinity and one vertex of angle ${\pi \over 3}$.
The associated twinned buildings have apartments isomorphic to the latter hyperbolic tesselation. 
This Weyl group is generated by the reflections in the faces of the hyperbolic triangle, and the Dynkin diagram has a (unique, involutive) symmetry exchanging the vertices corresponding to the reflections in the two edges of the hyperbolic triangle meeting at the vertex of angle ${\pi \over 3}$.
Using \cite[Th. 2]{Morita} and \cite[Th. 1]{BP}, one sees that any prenilpotent pair of two roots leading to a non-trivial commutation relation between root groups is contained, up to conjugation by the Weyl group, in the standard residue of type $A_2$.

Suppose now that $\mathcal G_A(\FF_{q^2})$ is the split Kac--Moody group of that type, defined over a finite ground field of order $q^2$. According to  \cite[Proposition 13.2.3]{RemAst}, the non-trivial element of the Galois group of the extension $\FF_{q^2}/\FF_q$, composed with the symmetry of the Dynkin diagram, yields an involutory automorphism of $\mathcal G_A(\FF_{q^2})$  whose centraliser is  a quasi-split Kac-Moody group over the finite field $\FF_q$.
This quasi-split group acts on a twin tree obtained as the fixed point set of the involution acting on the twin building of the split group; the valencies are equal to $1+q$ and $1+q^3$, corresponding to root groups isomorphic to $(\FF_q,+)$ and to a $p$-Sylow subgroup of ${\rm SU}_3(q)$, respectively.
In particular, the root groups of order $q^3$ are nilpotent of step 2.
Moreover for any prenilpotent pair of two distinct roots, the corresponding root groups commute to one another: this follows from the last statement in the previous paragraph.
 
Further examples of twin tree lattices satisfying the simplicity criterion from Theorem~\ref{thm - general}, of a more exotic nature, can be constructed as in 
\cite{RemRon}. In particular it is possible that the two conjugacy classes of  root groups have coprime order. 

Finitely generated Kac--Moody groups associated with the generalized Cartan matrix 
$ \left( \begin{array}{cc} \hfill 2  & -4 \\ -1 & \hfill 2 \end{array} \right)$ or $ \left( \begin{array}{cc} \hfill 2  & -2 \\ -2 & \hfill 2 \end{array} \right)$, are known to be residually finite (and can in fact be identified with some $S$-arithmetic groups of positive characteristic). In particular it cannot be expected that the conclusions of Theorem~\ref{thm - KM} hold without any condition on the Cartan matrix $A$. The remaining open case is that of a matrix of the form $A_{m, n} =  \left( \begin{array}{cc} \hfill 2  & -m \\ -n & \hfill 2 \end{array} \right)$ with $m, n > 1$. In that case Condition~(ii) from Theorem~\ref{thm - general} holds, but Condition~(i) is violated. On the other hand, if the matrix $A_{m, n}$ is congruent to the matrix $A_{m', n'}$ modulo  $q-1$, then the corresponding Kac--Moody groups over $\FF_q$ are isomorphic (see \cite[Lem.~4.3]{Ca}). In particular if $(m', n') = (2, 2)$ or $(m', n') = (4, 1)$, then all these Kac--Moody groups are residually finite. It follows that over $\FF_2$, a rank two Kac--Moody group
 is either residually finite (because it is isomorphic to a Kac--Moody group of affine type), or virtually simple, by virtue of Theorem~\ref{thm - KM}. The problem whether this alternative holds for rank two Kac--Moody groups over larger fields remains open; its resolution will require to deal with Cartan matrices of the form $A_{m, n}$ with $m, n>1$.




\smallskip

\subsection*{Acknowledgements} The second author warmly thanks the organizers of the Special Quarter {\it Topology and Geometric Group Theory}~held at the Ohio State University (Spring 2011).

\section{Proof of the simplicity criterion}

\label{s - simplicity criterion}

Virtual simplicity will be established following the Burger--Mozes strategy from \cite{BurgerMozes} by combining the \textbf{Normal Subgroup Property}, abbreviated \textbf{(NSP)}, with the property of non-residual finiteness. This strategy was also implemented in \cite{CaRe}. The part of the work concerning (NSP) obtained in that earlier reference already included the rank two case, and thus applies to our current setting; its essential ingredient is the work of Bader--Shalom~\cite{BaderShalom}:

\begin{prop}\label{prop:NSP}
Let $\Lambda$ be a twin building lattice with associated root group datum $(U_\alpha)_{\alpha \in \Phi}$. Assume that $\Lambda$ is generated by the root groups. 

If $\Phi$ is irreducible, then every normal subgroup of $\Lambda$ is either finite central, or of finite index. In particular, if $\Lambda$ is center-free (equivalently if it acts faithfully on its twin building), then $\Lambda$ is just-infinite. 
\end{prop}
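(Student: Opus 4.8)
The plan is to exhibit $\Lambda$ as an \emph{irreducible} lattice in a product of two non-discrete, compactly generated, locally compact groups, and then to invoke the Normal Subgroup Theorem of Bader--Shalom \cite{BaderShalom}. Since the root groups are finite, the buildings $X_+$ and $X_-$ are locally finite and the diagonal action of $\Lambda$ on $X_+ \times X_-$ is properly discontinuous; moreover the kernel of this action coincides with the center $Z(\Lambda)$ (this is the parenthetical equivalence ``center-free $\iff$ faithful''), which is finite. Dividing out by it, I would regard $\Lambda$ as a lattice in $\Aut(X_+) \times \Aut(X_-)$ and set $G_\pm := \overline{\proj_\pm(\Lambda)}$, so that $\Lambda$ sits as a lattice in $G := G_+ \times G_-$ projecting densely to each factor.

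The first point to establish is that each $G_\pm$ is non-discrete; this is precisely the assertion that $\Lambda$ is an irreducible lattice. Local finiteness of $X_\pm$ ensures that vertex stabilisers in $\Aut(X_\pm)$, hence in $G_\pm$, are compact open, so $G_\pm$ is discrete if and only if these stabilisers are finite. A vertex stabiliser $\Lambda_{v_+}$ acts properly on the factor $X_-$ and is \emph{infinite} --- a manifestation of the non-uniformity of $\Lambda$, traceable to the infinite horospherical subgroups fixing $v_+$ inside the root group datum --- so the closure of its image in the compact stabiliser of $v_+$ in $G_+$ is infinite. Hence $G_+$, and symmetrically $G_-$, is non-discrete. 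Compact generation and local compactness of $G_\pm$ follow from local finiteness together with the cocompactness of the strongly transitive $G_\pm$-action on $X_\pm$.

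Next I would verify that each factor $G_\pm$ has trivial amenable radical, the structural hypothesis underlying \cite{BaderShalom}. Irreducibility of $\Phi$ guarantees that the Coxeter complex, and therefore each building $X_\pm$, is irreducible and does not split as a direct product; combined with thickness, this produces hyperbolic isometries in $G_\pm$ without a common fixed point at infinity, so no non-trivial closed amenable normal subgroup can exist (such a subgroup would force a $G_\pm$-invariant point or pair at the boundary). With $G = G_+ \times G_-$ a product of two compactly generated, non-discrete, locally compact groups of this kind, and $\Lambda$ an irreducible lattice in $G$, the Bader--Shalom theorem applies and shows that every normal subgroup $N \trianglelefteq \Lambda$ is either central --- hence contained in the finite group $Z(\Lambda)$ --- or of finite index.

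The ``in particular'' clause then follows at once: if $\Lambda$ is center-free then $Z(\Lambda) = 1$, so every non-trivial normal subgroup has finite index, which is the definition of being just-infinite. I expect the genuine difficulty to lie not in this formal deduction but in securing that the Bader--Shalom machinery is applicable to a \emph{non-uniform} lattice. Their argument rests on an ergodic-theoretic input (amenability and metric ergodicity of the Poisson boundary of the factors relative to $\Lambda$), and while in the cocompact case the requisite integrability is automatic, here it must be established by hand, exploiting the explicit combinatorial fundamental domain for the $\Lambda$-action supplied by the root group datum. Establishing this integrability, together with the sharp triviality of the amenable radical of the factors, is where the real work resides; the remaining steps are structural consequences of the theory of twin buildings, and the whole argument is insensitive to the rank of $\Phi$, so it covers the rank two situation on the same footing as the higher-rank cases of \cite{CaRe}.
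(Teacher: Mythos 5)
Your proposal is correct and follows essentially the same route as the paper: the paper's entire proof of Proposition~\ref{prop:NSP} is the citation of \cite[Th.~18]{CaRe}, and the argument behind that citation is precisely the Bader--Shalom strategy you outline --- realizing $\Lambda$, modulo its finite centre, as an irreducible lattice with dense projections in the product of the closures of its projections to $\Aut(X_+)$ and $\Aut(X_-)$, and applying the normal subgroup theorem of \cite{BaderShalom}. The verifications you flag as ``where the real work resides'' (the integrability-type hypotheses needed to run Bader--Shalom for a \emph{non-uniform} lattice, together with the structural properties of the two factors) are exactly what is carried out in \cite{CaRe}, so your sketch agrees with the paper's proof both in strategy and in what it ultimately defers to that reference.
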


\begin{proof}
See \cite[Th.~18]{CaRe}. 
\end{proof}

The novelty in the present setting relies in the proof of non-residual finiteness. In the former paper \cite{CaRe}, we exploited some hyperbolic behaviour of non-affine Coxeter groups, appropriately combined with the commutation relations of $\Lambda$. This argument cannot be applied to infinite dihedral Weyl groups. Instead, we shall use the following non-residual finiteness result for wreath products, due to Meskin  \cite{Meskin}:

\begin{prop}\label{prop:Meskin}
Let $F, Z$ be two groups, and let $\Gamma = F \wr Z \cong (\bigoplus_{i \in {Z}} F) \rtimes {Z}$ be their wreath products. Assume that $Z$ is infinite. 

Then any finite index subgroup of $\Gamma$ contains the subgroup $\bigoplus_{i \in {Z}} [F, F]$. In particular, if $F$ is not abelian, then $\Gamma$ is not residually finite. 
\end{prop}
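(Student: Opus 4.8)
The plan is to prove the stronger, self-contained statement that every finite-index subgroup $H \le \Gamma$ contains $B' := \bigoplus_{i \in Z}[F,F]$; the non-residual-finiteness assertion then follows immediately, since for non-abelian $F$ the subgroup $B'$ is non-trivial and yet lies in the intersection of all finite-index subgroups. First I would reduce to the case of a \emph{normal} finite-index subgroup: replacing $H$ by its normal core (the intersection of its finitely many conjugates) only shrinks it while keeping it of finite index, so it suffices to show $B' \subseteq N$ for every finite-index normal $N \trianglelefteq \Gamma$, and then apply this to the core of a general $H$. Writing $B = \bigoplus_{i \in Z} F$ for the base group, I note that since distinct direct factors of $B$ commute one has $[B,B] = \bigoplus_{i \in Z}[F,F] = B'$; hence, if $\pi \colon \Gamma \to \Gamma/N$ denotes the quotient map, the goal $B' \subseteq N$ is equivalent to $\pi(B)$ being abelian.

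For $f \in F$ and $i \in Z$, let $f[i] \in B$ denote the element equal to $f$ in coordinate $i$ and trivial elsewhere; the defining $Z$-action by left translation on coordinates gives $w\, f[i]\, w\inv = f[wi]$ for all $w \in Z$. The key step exploits that $Z$ is infinite: the restriction to $Z$ of the quotient map $\pi \colon \Gamma \to \Gamma/N$ cannot be injective, since $\Gamma/N$ is finite, so there is a non-trivial element $z \in Z$ with $\pi(z) = 1$, i.e. $z \in N$. Consequently, for any $f \in F$, $\pi(f[z]) = \pi(z)\,\pi(f[e])\,\pi(z)\inv = \pi(f[e])$.

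Now I take arbitrary $a, a' \in F$. Since $z \neq e$, the elements $a[e]$ and $a'[z]$ lie in distinct direct factors of $B$ and therefore commute, whence $[\pi(a[e]), \pi(a'[z])] = 1$. Combining this with the identity $\pi(a'[z]) = \pi(a'[e])$ from the previous step yields $[\pi(a[e]), \pi(a'[e])] = 1$, that is $\pi\bigl([a,a']\,[e]\bigr) = 1$, so $[a,a']\,[e] \in N$. As $a, a'$ were arbitrary this gives $[F,F]\,[e] \subseteq N$; conjugating by $w \in Z$ and using normality of $N$ then gives $[F,F]\,[wi] \subseteq N$ for every coordinate, hence $B' = \bigoplus_{i \in Z}[F,F] \subseteq N$, as required.

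The only genuinely delicate point is the key step, and what makes it work is precisely that \emph{no} structural hypothesis on $Z$ is needed (it may be a torsion group, non-finitely-generated, and so on): all one uses is that an infinite $Z$ must meet any finite-index $N$ in a non-trivial element $z$, and that conjugation by this single $z$ does double duty, fixing the $\pi$-image of a base-group generator while simultaneously shifting its support off the coordinate $e$. This is what converts an in-coordinate commutator into a trivial cross-coordinate commutator modulo $N$. Everything else — the normal-core reduction, the identity $[B,B]=\bigoplus_{i}[F,F]$, and the propagation from one coordinate to all coordinates via normality — is routine bookkeeping.
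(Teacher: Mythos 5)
Your proposal is correct and follows essentially the same strategy as the paper's own proof: reduce to a finite-index normal subgroup, find a non-trivial $z \in Z$ killed by the quotient map (using that $Z$ is infinite), and use conjugation by $z$ to replace one entry of an in-coordinate commutator by an element supported in a different coordinate, which commutes with the other entry, forcing the commutator into the kernel. The only cosmetic difference is that you work at the identity coordinate and propagate to all coordinates by normality, while the paper runs the computation at each coordinate $i$ directly.
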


\begin{proof}
For each $i \in Z$, let $F_i$ be a copy of $F$, so that $F \wr Z =  (\bigoplus_{i \in {Z}} F_i) \rtimes Z$.

Let $\varphi \colon \Gamma \to Q$ be a homomorphism to a finite group $Q$. Since $Z$ is infinite, there is some $t \in Z \setminus \{ 1 \}$ 
such that $\varphi(t)=1$. 
Notice that, for all $i \in Z$ and all  $x \in F_i$, we have $txt^{-1}  \in F_{ti} \neq F_i$, whence $txt^{-1}  $ commutes with every element of $F_i$. Therefore, for all $y \in F_i$, we have
$$
\begin{array}{rcl}
\varphi\bigl([x,y]\bigr) & = & [\varphi(x),\varphi(y)] \\
& = & [\varphi(t x t^{-1}),\varphi(y)] \\
& = & \varphi\bigl([t xt^{-1},y]\bigr)\\
& = & 1.
\end{array}
$$
This proves that $[F_i, F_i]$ is contained in $\Ker(\varphi)$, and so is thus $\bigoplus_{i \in {Z}} [F_i, F_i]$. This proves that every finite index normal subgroup $\Gamma$ contains $\bigoplus_{i \in {Z}} [F_i, F_i]$. The desired result follows, since every finite index subgroup contains a finite index normal subgroup.
\end{proof}

\begin{proof}[Proof of Theorem~\ref{thm - general}]
Recall that in the case of twin trees, a pair of roots $\{ \phi; \psi \}$ is prenilpotent if and only if $\phi \supseteq \psi$ or $\psi \supseteq \phi$ (where the roots $\phi$ and $\psi$ are viewed as half-apartments).
By (i) there exists such a pair with $[U_\phi, U_\psi] \neq \{ 1 \}$ (possibly $\phi=\psi$). In particular the group $F = \langle U_\phi,U_\psi \rangle$ is non-abelian.

In view of (ii), the distance between the roots $\phi$ and $\psi$ in the trees on which $\Lambda$ acts is smaller than $C$. Pick an element in $\Lambda$ stabilising the standard twin apartment and acting on it as a translation of length~$>2C$. It follows from (ii) and from the axioms of Root Group Data that the subgroup  of $\Lambda$ generated 
by $F$ and $t$ is isomorphic to the wreath product $F \wr \ZZ$, where the cyclic factor is generated by $t$. 

Since $F$ is not abelian, we deduce from Proposition~\ref{prop:Meskin} that {$\Lambda$} contains a non-residually finite subgroup, and can therefore not be residually finite. On the other hand $\Lambda$ is just-infinite by Proposition~\ref{prop:NSP}. Therefore, we deduce from  \cite[Prop.~1]{Wilson} that the unique smallest finite index subgroup $\Lambda^0$ of $\Lambda$ is a finite direct product of $m \geqslant 1$ pairwise isomorphic simple groups. It remains to show that $m=1$. This follows from the fact that $\Lambda$ acts minimally (in fact: edge-transitively) on each half of its twin tree, and so does $\Lambda^0$. But a group acting minimally on a  tree cannot split non-trivially as a direct product. Hence $m=1$ and $\Lambda^0$ is a simple subgroup of finite index in $\Lambda$. 
\end{proof}

{ 
\begin{lem}\label{lem:Addendum}
Retain the hypotheses of Theorem~\ref{thm - general} and assume in addition that  one of the following conditions is satisfied:
\begin{itemize}
\item[(i$'$)] The  commutator $[U_\phi, U_\psi]$ contains some root group $U_\gamma$.

\item[(i$''$)] $\phi = \psi$, and if the rank one group $\la U_\phi, U_{-\phi} \ra$ is a sharply transitive group, then the commutator $[U_\phi, U_\phi]$ is of even order.  
\end{itemize} 
Then the maximal finite quotient $\Lambda/\Lambda^0$ afforded by Theorem~\ref{thm - general} is abelian. Moreover we have $|\Lambda/\Lambda^0| \leqslant \max_{\alpha \in \Phi} |U_\alpha|$ if (i$'$) holds, and $|\Lambda/\Lambda^0| \leqslant( \max_{\alpha \in \Phi} |U_\alpha/[U_\alpha, U_\alpha]|)^2$ if (i$''$) holds.
\end{lem}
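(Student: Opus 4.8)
The plan is to sharpen the proof of Theorem~\ref{thm - general} by tracking exactly which elements land in $\Lambda^0$. Recall that $\Lambda^0$ is the unique smallest finite-index subgroup, so $\Lambda/\Lambda^0$ is the maximal finite quotient; equivalently, $\Lambda^0$ is the intersection of all finite-index subgroups. Thus a concrete way to force a root group $U_\gamma$ into $\Lambda^0$ is to exhibit it inside $[F,F]$ for some non-abelian $F = \la U_\phi, U_\psi\ra$ sitting inside a wreath product $F \wr \ZZ \le \Lambda$: by Proposition~\ref{prop:Meskin} every finite-index subgroup of $F \wr \ZZ$ contains $\bigoplus_{i} [F_i, F_i]$, and in particular $[F,F]$ lies in every finite-index subgroup of $\Lambda$, hence in $\Lambda^0$.

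First I would treat case (i$'$). The hypothesis gives $U_\gamma \subseteq [U_\phi, U_\psi] \subseteq [F,F] \subseteq \Lambda^0$. Since $\Lambda^0$ is normal and $\Lambda$ acts transitively on each conjugacy class of roots (the Weyl group acts transitively on roots of a given sign, and $\Lambda$ realises $W$), conjugating $U_\gamma$ shows that \emph{every} root group in the conjugacy class of $\gamma$ lies in $\Lambda^0$. The delicate point is that there may be two conjugacy classes of roots (the two halves of the twin tree can have different valencies); I would argue that the rank-one subgroups $\la U_\alpha, U_{-\alpha}\ra$ tie the two classes together, so that once one root group is in $\Lambda^0$, the torus-like elements and the fundamental reflections it produces drag in enough of the other class as well. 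Modulo this, $\Lambda^0$ contains all but the abelianised contribution of a single root group per class, giving the quotient bound $|\Lambda/\Lambda^0| \leqslant \max_\alpha |U_\alpha|$ and abelianity, since the quotient is generated by the images of root groups which now commute (their commutators having been absorbed).

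Next I would treat case (i$''$), where $\phi = \psi$ so $F = U_\phi$ is itself non-abelian and $[F,F] = [U_\phi, U_\phi] \subseteq \Lambda^0$. The same conjugation argument puts the commutator subgroup of every root group into $\Lambda^0$, so $\Lambda^0$ contains $\la [U_\alpha, U_\alpha] : \alpha \in \Phi\ra$ and the quotient is a quotient of the product of the abelianisations $U_\alpha/[U_\alpha, U_\alpha]$ over the (at most two) conjugacy classes, whence $|\Lambda/\Lambda^0| \leqslant (\max_\alpha |U_\alpha/[U_\alpha, U_\alpha]|)^2$. The refinement needed here is that the image of $U_\phi$ in $\Lambda/\Lambda^0$ is abelian but possibly still non-trivial; to see the two classes collapse to a product and to control abelianity I would pass to the rank-one group $\la U_\phi, U_{-\phi}\ra = \St(v)$, a Moufang (or sharply transitive) structure on a star, and use the sharp-transitivity hypothesis together with the even-order condition on $[U_\phi, U_\phi]$ to find an involution in the torus that realises the nontrivial commutation, again forcing the relevant commutators into $\Lambda^0$.

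The main obstacle, in both cases, is the two-conjugacy-class phenomenon: unlike the equal-valency situation, a single root group landing in $\Lambda^0$ need not directly give the root groups of the opposite half, and the naive bound would be a product over both classes even in case (i$'$). Overcoming this requires exploiting the rank-one subgroups $\la U_\alpha, U_{-\alpha}\ra$ and the RGD-axioms relating $U_\alpha$, $U_{-\alpha}$, and the torus to transfer membership in $\Lambda^0$ across the two classes — this is precisely where the sharp-transitivity and even-order hypotheses of (i$''$) enter, ensuring the needed involution exists so that the sharper single-factor bound of (i$'$) can be recovered or the product bound of (i$''$) is not worsened. The remaining verifications — that the quotient is generated by commuting root-group images, hence abelian — are then routine applications of the RGD-axioms and the commutation relations guaranteed by Condition~(ii).
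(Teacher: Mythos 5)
Your starting point matches the paper's: Meskin's proposition forces $[F,F]$ into $\Lambda^0 = N$, hence $U_\gamma \subseteq N$ in case (i$'$) (resp.\ $[U_\phi,U_\phi] \subseteq N$ in case (i$''$)), and normality of $N$ spreads this over a whole conjugacy class. But the heart of the proof is missing, and where you do gesture at it, you point in the wrong direction. The key mechanism in the paper is to get a \emph{reflection} into $N$ and then to combine it with condition~(ii) of Theorem~\ref{thm - general}, which your proposal never invokes. Concretely, in case (i$'$): since $X_\gamma = \la U_\gamma, U_{-\gamma}\ra$ acts $2$-transitively on the conjugates of $U_\gamma$, the group $U_{-\gamma}$ is $X_\gamma$-conjugate to $U_\gamma$, so $X_\gamma \subseteq N$ and in particular $r_\gamma \in N$. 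Then for any root $\alpha$ (of the \emph{other} class) whose wall is at distance $>C/2$ from $\partial\gamma$, one has $\alpha \subset r_\gamma(-\alpha)$ with walls at distance $>C$, so $U_\alpha$ and $U_{r_\gamma(-\alpha)}$ commute by (ii); since $r_\gamma \in N$, the images of $U_{-\alpha}$ and $U_{r_\gamma(-\alpha)}$ in $\Lambda/N$ coincide, whence $\varphi(U_\alpha)$ and $\varphi(U_{-\alpha})$ commute, $\varphi(X_\alpha)$ is abelian, and $\varphi(U_\alpha)=\varphi(U_{-\alpha})$. This is what makes $\varphi(\Lambda)=\varphi(U_\alpha)$ abelian with the single-factor bound $\max_\alpha|U_\alpha|$. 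Your substitute --- that the rank-one subgroups ``drag in enough of the other class'' into $\Lambda^0$ --- is not what happens and cannot happen: the other conjugacy class does \emph{not} land in $\Lambda^0$; it survives and \emph{is} the abelian quotient. Had both classes been absorbed, the quotient would be trivial; had each class merely contributed an abelianized root group, the bound would be a product of two factors, not a single maximum.

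In case (i$''$) there are two further gaps. First, your claim that $[U_\alpha,U_\alpha]\subseteq \Lambda^0$ for all $\alpha$ already makes $\Lambda/\Lambda^0$ a quotient of a product of two abelianizations is a non sequitur: the quotient is generated by the images of \emph{infinitely many} root groups, each abelian, but nothing so far forces these images to commute with one another or to coincide within a class; a group generated by abelian subgroups need not be abelian. The paper resolves this by (a) a case split you omit entirely: if $X_\phi$ is \emph{not} sharply $2$-transitive it is a rank-one simple group of Lie type, so $1 \neq [U_\phi,U_\phi] \subseteq N$ forces $X_\phi \subseteq N$ and one argues as in (i$'$); if $X_\phi$ \emph{is} sharply $2$-transitive, the even-order hypothesis gives an involution in $[U_\phi,U_\phi]\subseteq N$, and since all involutions of a finite sharply $2$-transitive group are conjugate, $N$ contains an involution $r_\phi$ \emph{swapping} $U_\phi$ and $U_{-\phi}$ --- note this involution is not ``in the torus,'' as you suggest, since the torus normalizes both root groups; (b) the same (ii)-based computation as above, collapsing $\varphi(U_\alpha)=\varphi(U_{-\alpha})$ for roots $\alpha$ far from $\partial\phi$; and (c) a generation argument: choose two such roots $\alpha\subset\beta$ with no root strictly between them, so that $U_\alpha$ and $U_\beta$ commute by the RGD-axioms and $\Lambda = \la U_{\pm\alpha}\cup U_{\pm\beta}\ra$, whence $\Lambda/N$ is a quotient of $U_\alpha/[U_\alpha,U_\alpha]\times U_\beta/[U_\beta,U_\beta]$. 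Steps (a)--(c) are precisely the content your proposal defers to ``routine applications of the RGD-axioms,'' but they are the proof, not routine verifications.
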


\begin{proof}
Retain the notation from the proof of Theorem~\ref{thm - general}. Proposition~\ref{prop:Meskin} ensures that every finite index normal subgroup of $F \wr \ZZ$ contains the commutator subgroup $[F, F]$. In particular, so does $N = \Lambda^0$.

If $[U_\phi, U_\psi]$ contains some root group $U_\gamma$, then $U_\gamma$ is contained in $N$. Since $X_\gamma = \la U_\gamma, U_{-\gamma} \ra$ is a finite group acting $2$-transitively on the conjugacy class of $U_\gamma$, it follows that $X_\gamma$ is entirely contained in $N$. In particular, so is the element $r_\gamma \in X_\gamma$ acting as the reflection associated with $\gamma$ on the standard twin apartment.

Let now $\alpha \in \Phi$ be any root such that $\alpha \subset \gamma$ and that the wall $\partial \alpha$ is at distance $>C/2$ away from $\partial \gamma$. Then $\alpha \subset r_\gamma(-\alpha)$, and the walls associated with the latter two roots have distance~$>C$. By condition (ii), the corresponding root groups commute. 
Denoting by $\varphi \colon \Lambda \to \Lambda / N$ the quotient map,  we deduce 
$$
\begin{array}{rcl}
[\varphi(U_{\alpha}), \varphi(U_{-\alpha})]  & = &  [\varphi(U_{\alpha}), \varphi(U_{r_\gamma(-\alpha)})] \\
& = & \varphi\bigl([U_{\alpha}, U_{{r_\gamma(-\alpha)}}]\bigr)\\
& = & 1.
\end{array}
$$
Thus the image of the rank one group $X_\alpha = \la U_\alpha \cup U_{-\alpha} \ra$ under $\varphi$ is abelian, and identifies with a quotient of $U_\alpha$. 

Remark finally that there are only two conjugacy classes of root groups, the union of which generates the whole group $\Lambda$. One of these conjugacy classes has trivial image under $\varphi$, since $N$ contains the root group $U_\gamma$. The other conjugacy class contains roots $\alpha$ whose wall is far away from $\partial \gamma$. This implies that $\varphi(\Lambda) = \varphi(U_\alpha)$, which has been proved to be abelian. We are done in this case. 

Assume now that condition (i$''$) holds. Again, by Proposition~\ref{prop:Meskin}, the commutator $[U_\phi, U_\phi]$ is contained in $N$. 

If the rank one group $X_\phi = \la U_\phi, U_{-\phi} \ra$ is not a sharply $2$-transitive group, then it is a rank one simple group of Lie type and we may conclude that it is entirely contained in $N$. Hence the same argument as in the case (i$'$) with $\phi$ playing the role of $\gamma$ yields the conclusion. 

If the rank one group $X_\phi = \la U_\phi, U_{-\phi} \ra$ is a sharply $2$-transitive group, then we dispose of the additional hypothesis that the commutator $[U_\phi, U_\phi]$ contains an involution. Since $X_\phi$ is a  sharply $2$-transitive, all its involutions are conjugate. They must thus all be contained in $N$. In particular $N$ contains some involution $r_\phi$ swapping $U_\phi$ and $U_{-\phi}$. Again, this is enough to apply the same computation as above and conclude that for each root $\alpha$ whose wall is far away from $\partial \phi$, the image of $\la U_\alpha, U_{-\alpha}\ra$ under $\varphi$ is abelian and isomorphic to a quotient of $U_\alpha$. We take two distinct such roots $\alpha \subset \beta$ so that there is no root $\gamma$ strictly between $\alpha$ and $\beta$. Thus 
$U_\alpha$ and $U_\beta$ commute by the axioms of Root Group Data. Moreover $\Lambda$ is generated by $U_\alpha \cup U_{-\alpha} \cup U_\beta \cup U_{-\beta}$, and we have just seen that, modulo $N$, the root groups $U_\alpha$ and $U_{-\alpha}$ (resp. $U_\beta$ and $U_{-\beta}$) become equal, and abelian. It follows that $\Lambda/N$ is isomorphic to a quotient of the direct product $U_\alpha/[U_\alpha, U_\alpha] \times U_\beta/[U_\beta, U_\beta]$. The desired result follows. 
\end{proof}

\begin{remark}
Finite sharply $2$-transitive groups are all known; they correspond to finite near-fields, which were classified by Zassenhaus. All of them are either Dickson near-fields, or belong to a list of seven exceptional examples. An inspection of that list shows that, in all of these seven exceptions, the root group contains a copy of $\mathrm{SL}_2(\FF_3)$ or $\mathrm{SL}_2(\FF_5)$ (see \cite[\S 1.12]{Cameron}); in particular the commutator subgroup of a root group is always of even order in those cases. Thus condition (i$''$) from Lemma~\ref{lem:Addendum} only excludes certain sharply $2$-transitive groups associated with Dickson near-fields.
\end{remark}

}


\section{Kac--Moody groups of rank two}
\label{s - KM case}
 
Let $\Lambda$ be a Kac--Moody group over the finite field  $\FF_q$ of order $q$, associated with the generalized Cartan matrix $A_{m, n} =  \left( \begin{array}{cc} \hfill 2  & -m \\ -n & \hfill 2 \end{array} \right)$. The Weyl group of $\Lambda$ is the infinite dihedral group and $\Lambda$ is a twin tree lattice; the corresponding trees are both regular of degree $q+1$. 

When $mn< 4$, the matrix $A$ is of  \textbf{finite type} and $\Lambda$ is then a finite Chevalley group over $\FF_q$. 
When $mn=4$, the matrix $A$ is of \textbf{affine type} and $\Lambda$  is  linear, and even $S$-arithmetic; in particular it is residually finite.

In order to check that the conditions from Theorem~\ref{thm - general} are satisfied when $m>4$ and $n=1$, we need a sharp control on the commutation relations satisfied by the root groups. The key technical result is the following lemma, which follows from the work of Morita~\cite{Morita} and Billig--Pianzola~\cite{BP}.

\begin{lem}\label{lem:CommRel}
Let $\Pi = \{\alpha, \beta\}$ be the standard basis of the root system $\Delta$ for $\Lambda$ and set $t = r_\alpha r_\beta$. For all $i \in \ZZ$, let 
$\alpha_i = t^i \alpha$ and $\beta_i = t^i \beta$ and set
$$\Phi({+\infty}) = \{- \alpha_i, \beta_j \; | \; i, j \in \ZZ\}
\hspace{.5cm} \text{and } \hspace{.5cm}
\Phi({-\infty}) = \{ \alpha_i, -\beta_j \; | \; i, j \in \ZZ\}.$$

Assume that $m>4$ and $n=1$. Then for all $\phi, \psi \in\Phi(+\infty)$, either $U_\phi$ and $U_\psi$ commute, or we have 
$$
\{\phi, \psi\} = \{- \alpha_i, -\alpha_{i+1}\} \text{ for some } i \in \ZZ \text{ and } [U_\phi, U_\psi] = U_{\beta_i}.
$$ 
Similarly, for all  $\phi, \psi \in\Phi(-\infty)$,  either $U_\phi$ and $U_\psi$ commute, or we have 
$$
\{\phi, \psi\} = \{ \alpha_i, \alpha_{i+1}\} \text{ for some } i \in \ZZ \text{ and } [U_\phi, U_\psi] = U_{-\beta_i}.
$$ 

\end{lem}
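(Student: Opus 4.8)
The plan is to reduce the problem to an explicit computation of commutators in a rank two Kac--Moody group, using the structure theory of the positive (and negative) root groups provided by Morita and Billig--Pianzola. Recall that the real roots of the rank two root system $\Delta$ come in two families according to their sign, and the positive real roots are precisely $\{\beta_j \; | \; j \in \ZZ\} \cup \{-\alpha_i \; | \; i \in \ZZ\}$ arranged in a single bi-infinite convex order dictated by the action of the translation $t = r_\alpha r_\beta$. The set $\Phi(+\infty)$ is exactly this set of positive roots, and the key point is that for a prenilpotent pair $\{\phi,\psi\}$ of distinct positive roots, the commutator $[U_\phi, U_\psi]$ lies in the product of the root groups indexed by the roots lying strictly between $\phi$ and $\psi$ in this order. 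So the first step is to fix the convex ordering of the positive real roots and record which pairs admit intermediate roots at all.

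Next I would invoke the explicit commutator formulas. The crucial structural input from \cite{Morita} and \cite{BP} is that, for the matrix $A_{m,1}$ with $m>4$, the only non-trivial structure constants occur for pairs of consecutive roots within a rank two subsystem isomorphic to $A_2$, while all longer-range commutators vanish. Concretely, in the ordering of positive roots the only pair $\{\phi,\psi\}$ with a single root strictly between them (and hence the only pair that can produce a non-trivial commutator landing in a single root group) is of the form $\{-\alpha_i, -\alpha_{i+1}\}$, with the intermediate root being $\beta_i$; all other prenilpotent pairs of distinct positive roots have either no intermediate roots, or intermediate roots whose associated commutators turn out to be trivial because the relevant entries of $A_{m,1}$ are too large to yield nonzero structure constants when $m>4$. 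This is where the hypothesis $m>4$ is used decisively: it guarantees that the $A_2$ subsystem is the only source of non-commutation. The explicit identity $[U_\phi, U_\psi] = U_{\beta_i}$ is then the standard Chevalley commutator relation in the $A_2$-subsystem generated by $\{-\alpha_i, \beta_i\}$ (equivalently, the translate by $t^i$ of the fundamental $A_2$ relation), where over $\FF_q$ the commutator of the two long-root groups is the full short-root group, up to the sign conventions inherited from the Kac--Moody structure constants.

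The negative case is then handled by symmetry: the map sending each positive root to its negative (or equivalently conjugating by the opposite twinning) interchanges $\Phi(+\infty)$ and $\Phi(-\infty)$ and exchanges $\beta_i$ with $-\beta_i$ and $-\alpha_i$ with $\alpha_i$, so the displayed statement for $\Phi(-\infty)$ follows formally from the one for $\Phi(+\infty)$ without any further computation. Thus I would prove the $\Phi(+\infty)$ assertion in detail and then remark that the $\Phi(-\infty)$ assertion is obtained by applying the opposition involution.

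The main obstacle I anticipate is extracting the precise vanishing statement from \cite{Morita} and \cite{BP} in a form that applies uniformly to all prenilpotent pairs at once, rather than to a single fixed pair. The cited references compute the commutator relations in terms of the heights and the structure constants of the Kac--Moody algebra, and one must verify that, when $m>4$, every prenilpotent pair other than the consecutive pairs $\{-\alpha_i, -\alpha_{i+1}\}$ produces only roots $\gamma$ that are themselves imaginary or whose structure constants vanish, so that no non-trivial real commutator survives. In other words, the delicate part is ruling out hidden non-trivial commutators coming from longer prenilpotent pairs whose intermediate roots happen to include real roots; this requires a careful bookkeeping of the convex order and a check that the only ``short'' root gap occurs inside the $A_2$-subsystems. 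Once this combinatorial control on the positive root system is in place, the identification of the commutator with $U_{\beta_i}$ is a direct application of the known rank two relations.
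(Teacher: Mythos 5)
Your high-level route is the same as the paper's: the paper proves this lemma by the bare citation of Theorem~2 of Morita and Theorem~1 of Billig--Pianzola, and you too ultimately defer the decisive vanishing statements to those two results. However, the scaffolding you build around the citation contains two concrete false claims, and they are load-bearing in the way you have organized the deduction. First, $\Phi(+\infty)$ is \emph{not} the set of positive real roots: it contains $-\alpha_0=-\alpha$, the negative of a simple root, and omits $\alpha$ itself; the positive system meets both $\Phi(+\infty)$ and $\Phi(-\infty)$. The distinction is not cosmetic, because the positive system is not pairwise prenilpotent (the pair of simple roots $\{\alpha,\beta\}$ is the classical example of a non-prenilpotent pair when $W$ is infinite, and there is no commutation relation at all bounding $[U_\alpha,U_\beta]$), whereas $\Phi(+\infty)$ -- the set of half-apartments containing a fixed end -- is totally ordered by inclusion and pairwise prenilpotent. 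Your ``bi-infinite convex order'' exists on $\Phi(+\infty)$, not on the positive system.

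Second, and more seriously, your key combinatorial claim -- that $\{-\alpha_i,-\alpha_{i+1}\}$ are the \emph{only} pairs with a single root strictly between them -- is wrong. In the inclusion order on $\Phi(+\infty)$ the two types alternate, $\cdots\subset-\alpha_{i+1}\subset\beta_\bullet\subset-\alpha_i\subset\beta_\bullet\subset\cdots$ (the two conjugacy classes of walls alternate along the apartment), so the pairs $\{\beta_j,\beta_{j+1}\}$ also have exactly one intermediate root, of $\alpha$-type. Hence counting geometrically intermediate roots cannot separate the commuting pairs from the non-commuting ones. What does separate them is arithmetic in the root lattice: since $n=1$, the sum $(-\alpha_i)+(-\alpha_{i+1})$ \emph{is} the intermediate $\beta$-type real root, whereas $\beta_j+\beta_{j+1}$ equals $m$ times the intermediate $\alpha$-type root, hence is not a root for $m\geqslant 2$, and no element of $\ZZ_{>0}\beta_j+\ZZ_{>0}\beta_{j+1}$ is real. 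Accordingly, the commutator bound you must use is the Kac--Moody one, $[U_\phi,U_\psi]\subseteq\langle U_\gamma : \gamma\in(\ZZ_{>0}\phi+\ZZ_{>0}\psi)\cap\Delta^{\mathrm{re}}\rangle$, and not the purely geometric ``between in the order'' bound coming from the RGD axioms alone; the latter would leave $[U_{\beta_j},U_{\beta_{j+1}}]$ undetermined. Ruling out real roots in $\ZZ_{>0}\phi+\ZZ_{>0}\psi$ for all remaining pairs is exactly where $m>4$ enters and exactly what the cited theorems accomplish: note that for $m=4$ the pair $\{-\alpha_0,-\alpha_2\}$ \emph{does} produce a real root ($-\alpha_0-\alpha_2=-2(3\alpha+\beta)$ is a long real root of the twisted affine system), consistent with the residual finiteness of the affine case, so any argument that never confronts such longer pairs head-on is incomplete. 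A smaller slip: the triple $\{-\alpha_i,-\alpha_{i+1},(-\alpha_i)+(-\alpha_{i+1})\}$ is only combinatorially of type $A_2$ -- the two summands have positive inner product and a different length from their sum -- so ``long/short root groups of an $A_2$-subsystem'' is not accurate, although the Chevalley-type relation $[x_\phi(u),x_\psi(v)]=x_{\phi+\psi}(\pm uv)$, which over $\FF_q$ gives the full root group, is indeed what Morita's theorem provides.
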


\begin{proof}
Follows from Theorem~2 in \cite{Morita} and Theorem~1 in \cite{BP}. 
\end{proof}

\begin{proof}[Proof of Theorem~\ref{thm - KM}]
Lemma~\ref{lem:CommRel} readily implies that Conditions (i) and (ii) from Theorem~\ref{thm - general} are satisfied (we can take $C=2$ in this case), so that $\Lambda$ is virtually simple. {In fact, Lemma~\ref{lem:CommRel} shows that some root group is equal to the commutator of a pair of prenilpotent root groups, so that condition (i$'$) from Lemma~\ref{lem:Addendum} is satisfied. The latter ensures that $\Lambda^0$ is the commutator subgroup of $\Lambda$, and that the quotient $\Lambda/\Lambda^0$ is bounded above by the maximal order of a root group. Thus the theorem holds, since all the root groups have order $q$ in this case. }
\end{proof}

\begin{bibdiv}
\begin{biblist}
\bib{BaderShalom}{article}{
   author={Bader, Uri},
   author={Shalom, Yehuda},
   title={Factor and normal subgroup theorems for lattices in products of
   groups},
   journal={Invent. Math.},
   volume={163},
   date={2006},
   number={2},
   pages={415--454},
}

\bib{BP}{article}{
   author={Billig, Yuly},
   author={Pianzola, Arturo},
   title={Root strings with two consecutive real roots},
   journal={Tohoku Math. J. (2)},
   volume={47},
   date={1995},
   number={3},
   pages={391--403},
}
\bib{BurgerMozes}{article}{
   author={Burger, Marc},
   author={Mozes, Shahar},
   title={Lattices in product of trees},
   journal={Inst. Hautes \'Etudes Sci. Publ. Math.},
   number={92},
   date={2000},
   pages={151--194},
}
\bib{Cameron}{book}{
   author={Cameron, Peter J.},
   title={Permutation groups},
   series={London Mathematical Society Student Texts},
   volume={45},
   publisher={Cambridge University Press},
   place={Cambridge},
   date={1999},
}
\bib{Ca}{article}{
   author={Caprace, Pierre-Emmanuel},
   title={``Abstract'' homomorphisms of split Kac-Moody groups},
   journal={Mem. Amer. Math. Soc.},
   volume={198},
   date={2009},
   number={924},
}
\bib{CaRe}{article}{
   author={Caprace, Pierre-Emmanuel},
   author={R{\'e}my, Bertrand},
   title={Simplicity and superrigidity of twin building lattices},
   journal={Invent. Math.},
   volume={176},
   date={2009},
   number={1},
   pages={169--221},
}

\bib{Gandini}{unpublished}{
 author={Gandini, Giovanni},
    title={Bounding the homological finiteness length},
    note={Groups Geom. Dyn. (to appear)},
    year={2012},
}
\bib{Meskin}{article}{
   author={Meskin, Stephen},
   title={Nonresidually finite one-relator groups},
   journal={Trans. Amer. Math. Soc.},
   volume={164},
   date={1972},
   pages={105--114},
}
\bib{Morita}{article}{
   author={Morita, Jun},
   title={Commutator relations in Kac--Moody groups},
   journal={Proc. Japan Acad. Ser. A Math. Sci.},
   volume={63},
   date={1987},
   number={1},
   pages={21--22},
}
\bib{RemAst}{article}{
   author={R{\'e}my, Bertrand},
   title={Groupes de Kac-Moody d\'eploy\'es et presque d\'eploy\'es},
   language={French, with English and French summaries},
   journal={Ast\'erisque},
   number={277},
   date={2002},
}

\bib{RemCRAS}{article}{
   author={R{\'e}my, Bertrand},
   title={Construction de r\'eseaux en th\'eorie de Kac--Moody},
   language={},
   journal={C. R. Acad. Sci. Paris S\'er. I Math.},
   volume={329},
   date={1999},
   number={6},
   pages={475--478},
}

\bib{RemRon}{article}{
   author={R{\'e}my, Bertrand},
   author={Ronan, Mark A.},
   title={Topological groups of Kac--Moody type, right-angled twinnings and
   their lattices},
   journal={Comment. Math. Helv.},
   volume={81},
   date={2006},
   number={1},
   pages={191--219},
}

\bib{TitsKM}{article}{
   author={Tits, Jacques},
   title={Uniqueness and presentation of Kac--Moody groups over fields},
   journal={J. Algebra},
   volume={105},
   date={1987},
   number={2},
   pages={542--573},
}
\bib{TitsTwin}{article}{
   author={Tits, Jacques},
   title={Twin buildings and groups of Kac--Moody type},
   conference={
      title={Groups, combinatorics \& geometry},
      address={Durham},
      date={1990},
   },
   book={
      series={London Math. Soc. Lecture Note Ser.},
      volume={165},
      publisher={Cambridge Univ. Press},
      place={Cambridge},
   },
   date={1992},
   pages={249--286},
}
\bib{Wilson}{article}{
   author={Wilson, John S.},
   title={Groups with every proper quotient finite},
   journal={Proc. Cambridge Philos. Soc.},
   volume={69},
   date={1971},
   pages={373--391},
}

\end{biblist}
\end{bibdiv}
  
\end{document}